\theoremstyle{plain}
\newtheorem{lema}{Lemma}[section]
\newtheorem{prop}[lema]{Proposition}
\newtheorem{teo}[lema]{Theorem}
\newtheorem{conj}[lema]{Conjecture}
\newtheorem{coro}[lema]{Corollary}
\theoremstyle{remark}
\newtheorem{obs}[lema]{Remark}
\theoremstyle{definition}
\newtheorem{defi}[lema]{Definition}
\newtheorem{ej}[lema]{Example}
\def\k{\mathcal{K}}
\def\etft0{\mathnormal{etfT_0}}
\def\dim{\text{dim}}
\def\he{\text{h}}
\newcommand{\st}{\operatorname{st}}
\newcommand{\lk}{\operatorname{lk}}
\newcommand{\K}{\mathcal{K}}
\newcommand{\X}{\mathcal{X}}
\begin{document}

\title[A new approach to Whitehead's asphericity question]{A new approach to Whitehead's asphericity question}

\author[M.A. Cerdeiro]{Manuela Ana Cerdeiro}
\author[E.G. Minian]{Elias Gabriel Minian}
\thanks{The authors' research is partially supported by CONICET}

\address{Departamento  de Matem\'atica - IMAS\\
 FCEyN, Universidad de Buenos Aires. \\ Buenos
Aires, Argentina.}

\email{cerdeiro@dm.uba.ar} 
\email{gminian@dm.uba.ar}

\begin{abstract}
We investigate Whitehead's asphericity question from a new perspective, using results and techniques of the homotopy theory of finite topological spaces. We also introduce a method of reduction to investigate asphericity based on the interaction between the combinatorics and the topology of finite spaces. 
\end{abstract}

\subjclass[2000]{57M20, 57M35, 55Q05, 06A06, 06A07, 57Q05.}

\keywords{Whitehead's asphericity question, Finite topological spaces, Simplicial complexes, CW-complexes, Posets.}

\maketitle

\section{Introduction}

In 1941 J. H. C. Whitehead \cite{wh} stated the following question:
\begin{center}
\textit{Is every subcomplex of an aspherical 2-dimensional complex itself aspherical?}
\end{center}
Recall that a path connected space $X$ is called \textit{aspherical} if its homotopy groups $\pi_n(X)$ are trivial for all $n\geq 2$.
By the Hurewicz theorem applied to the universal cover, a connected $2$-dimensional CW-complex $X$ is aspherical if $\pi_2(X)$ is trivial. Whitehead's asphericity  question is still unanswered, although some interesting advances have been obtained along the last seventy years. In 1954 W. H. Cockroft proved that the answer to the question is \textit{positive} if the subcomplex $K\subset L$ is finite, $L$ is obtained from $K$ by adding $2$-cells and the group $\pi_1(K)$ is either Abelian, finite or free \cite{coc}. In 1983 J. Howie \cite{ho} reduced the problem to the following two particular cases.
\begin{teo} [Howie]\label{ho}
 If the answer to Whitehead's question is {\normalfont negative}, then there exists a counterexample $K\subset L$ of one of the following two types:
\begin{itemize}
 \item[(a)] $L$ is finite and contractible, $K=L-e$ for some 2-cell $e$ of $L$, and $K$ is non-aspherical.
 \item[(b)] $L$ is the union of an infinite chain of finite non-aspherical subcomplexes $K=K_0\subset K_1\subset \cdots$ such that each inclusion map $K_i\to K_{i+1}$ is nullhomotopic.
\end{itemize}
\end{teo}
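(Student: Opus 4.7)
The plan is to begin with an arbitrary counterexample $(K,L)$ and perform successive reductions that terminate in a counterexample of one of the two stated forms. Three standing reductions apply to any counterexample: first, replace $L$ by its $2$-skeleton, which does not alter $\pi_2$; second, replace $K$ by a finite non-aspherical subcomplex, which exists because any nontrivial class in $\pi_2(K)$ is represented by a map $S^2 \to K$ with compact image, and this class remains nontrivial on some finite subcomplex containing that image; third, enlarge $K$ by the missing $0$- and $1$-cells of $L\setminus K$ so that the complement consists only of $2$-cells.

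The finite case is handled by a single-cell reduction. If $L$ is finite and $|L\setminus K|\geq 2$, choose a $2$-cell $e\in L\setminus K$ and consider $L'=L-e$. If $L'$ is aspherical, then $(K,L')$ is a counterexample with one fewer $2$-cell in the difference; if $L'$ is non-aspherical, then $(L',L)$ is a counterexample in which $L\setminus L'=\{e\}$. Iterating this dichotomy produces a counterexample with $L=K\cup e$. If in addition $\pi_1(L)=1$, then $L$ is simply connected and aspherical, hence contractible, yielding a counterexample of type (a).

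If $L$ is not simply connected --- either because $L$ was already infinite or because the single-cell reduction produced a non-contractible aspherical $L$ --- the next step is to pass to the universal cover $p\colon\tilde L\to L$. Since $L$ is aspherical and $\tilde L$ is simply connected, $\tilde L$ is contractible; the preimage $\tilde K=p^{-1}(K)$ still satisfies $\pi_2(\tilde K)=\pi_2(K)\neq 0$, so there is a finite non-aspherical subcomplex $K_0\subset\tilde K$. Because $\tilde L$ is contractible, the inclusion $K_0\hookrightarrow\tilde L$ is null-homotopic, and the null-homotopy has compact image supported on some finite subcomplex $K_1'\supset K_0$, in which $K_0\hookrightarrow K_1'$ is already null-homotopic. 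If $K_1'$ is aspherical then $(K_0,K_1')$ is a finite counterexample and one feeds it into the finite-case procedure above to obtain type (a); otherwise set $K_1=K_1'$, enlarging to include a prescribed new cell of $\tilde L$, and iterate. The resulting chain either breaks off at a finite counterexample (producing type (a)) or is completed to an infinite chain $K_0\subset K_1\subset\cdots$ of finite non-aspherical subcomplexes with null-homotopic inclusions and $\bigcup_i K_i=\tilde L$, which is a counterexample of type (b).

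The main obstacle is organizing the recursion so that it is well-founded: the finite-case reduction invoked from inside the chain construction could in principle trigger yet another passage to a universal cover. The resolution is that every failure in one branch (asphericity of a supposedly non-aspherical $K_i'$) instantly furnishes a finite counterexample that feeds the other branch, and the dichotomy is set up so that after finitely many steps the process terminates in one of the two prescribed forms.
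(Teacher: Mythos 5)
First, a point of comparison: the paper does not prove this statement at all --- it is quoted from Howie's 1983 article and used as a black box --- so your attempt can only be judged on its own terms. The preliminary reductions and the single-cell interpolation are fine (modulo two standard facts you should at least cite: that attaching cells of dimension at most $1$ preserves non-asphericity, and that for type (b) the union $\bigcup_i K_i$ is aspherical because $\pi_2$ commutes with the directed colimit and every map $\pi_2(K_i)\to\pi_2(K_{i+1})$ vanishes).

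The genuine gap is the one you name yourself in the last paragraph, and it is not a bookkeeping issue: it is the crux of the theorem. Your finite-case procedure only delivers a pair $K\subset L=K\cup e$ with $L$ finite and \emph{aspherical}, whereas type (a) demands $L$ \emph{contractible}; your only route to contractibility is the unargued hypothesis $\pi_1(L)=1$. When $\pi_1(L)\neq 1$ it is in fact infinite ($L$ is a finite-dimensional $K(\pi,1)$, so $\pi_1(L)$ is torsion-free), the universal cover is an infinite contractible complex, and you are thrown back into the chain construction; that construction can in turn break off at a finite aspherical subcomplex, producing a new finite counterexample whose ambient complex again need not be simply connected. Nothing prevents this loop from repeating forever: there is no decreasing complexity measure, and the sentence ``after finitely many steps the process terminates'' is precisely the assertion that needs proof. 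Howie's actual argument is organized around a different case division exactly to avoid this regress (roughly: either the chain inside a contractible ambient complex can always be continued non-aspherically, yielding type (b) outright, or it cannot, and a separate argument extracts a finite \emph{contractible} $L$ in that horn). As written, your proposal establishes the dichotomy only conditionally on a termination claim it does not justify, so it does not prove the theorem.
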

In 1996 E. Luft \cite{lu} proved that the existence of a counterexample of type $(a)$ actually implies the existence of a counterexample of type $(b)$.

In this paper we investigate Whitehead's asphericity question from a new perspective, using some of the tools of the homotopy theory of finite topological spaces developed in \cite{ba,bami1}. We restate the problem in terms of finite topological spaces (see Conjecture \ref{conj_etf}) and prove that certain classes of finite spaces (and consequently, of compact complexes) satisfy the conjecture. There is a connection between Whitehead's asphericity question and the Andrews-Curtis conjecture (see \cite{ho}). In fact, Howie proved that if the Andrews-Curtis conjecture and the ribbon disc complements conjecture are true, then there are no counterexamples of type $(a)$. Recently Barmak introduced the notion of a \it quasi-constructible  \rm 2-complex and proved that the class of these 2-complexes, which contains all constructible 2-complexes, satisfies the Andrews-Curtis conjecture \cite{ba}. In this article we show that this extensive class contains no counterexamples of type (a). 

 We also introduce a method of reduction to study asphericity using the interaction between the combinatorics and the topology of finite spaces.

For a comprehensive exposition on Whitehead's asphericity question we refer the reader to \cite{bo}.

\section{Preliminaries}\label{preliminaries}

In this section we will review some basic notions and results on finite topological spaces. For further details on the theory of finite spaces and its applications we refer the reader to \cite{ba,bami1,ma,mc,st}.

There is a natural connection between finite topological spaces and finite posets. Given a finite set $X$ and a topology on $X$, one can define a relation on the set $X$ by $$x\leq y \Leftrightarrow x\in U\ \text{for every open set } U\  
\text{containing } y.$$ It is easy to see that this relation is a preorder, i.e. it is reflexive and transitive. Note that it is antisymmetric if and only if the topology satisfies the $T_0$ axiom: for any given pair of points of $X$, there is an open set which contains only one of them.

Conversely, given a preorder $\leq$ and an element $x\in X$, we set  $$U_x:=\{y\in X :\ y\leq x\}\subseteq X.$$ The sets $U_x$ (for $x\in X$) form a basis for a topology on $X$.
These applications are mutually inverse and give rise to a one-to-one correspondence between finite $T_0$ spaces and finite posets. From now on we will only consider finite spaces which satisfy the $T_0$ axiom. In order to represent a finite $T_0$-space, we will use the Hasse diagram of the corresponding poset.

It is easy to see that a map $f:X\to Y$ is continuous if and only if it is order preserving, so this correspondence extends to the morphisms. It is also clear that a finite space is connected if and only if it is path connected (see \cite[Prop. 1.2.4]{ba}).

The \textit{length} of a chain $x_0<x_1<\cdots<x_n$ in a finite space $X$ is  $n$. The \textit{height} $\he(X)$ is the maximum length of the chains of $X$.

Given a finite space $X$, the \textit{order complex} $\K(X)$ is the simplicial complex defined as follows. The vertices of $\K(X)$ are the elements of $X$ and the $n$-simplices of $\K(X)$ are the chains in $X$ of $n+1$ elements. There is also an application in the opposite direction, which assigns  to any simplicial complex $K$ the \textit{face poset} $\X(K)$ of simplices of $K$ ordered by inclusion.
It is clear that $\he(X)=\dim(\K(X))$ for every finite space $X$ and $\he(\X(K))=\dim(K)$ for every simplicial complex $K$.

Figure \ref{fig:figurecomplejo} shows a finite space $X$ of 6 points, represented by its Hasse diagram, and the associated 2-complex $\K(X)$.
 
\begin{figure}[h]
\begin{minipage}{0.5\linewidth}
  \begin{displaymath}
  \xymatrix@C=10pt{
   & \bullet \ar@{-}[dl] \ar@{-}[dr]&  \bullet \ar@{-}[d] \\
    \bullet \ar@{-}[d] \ar@{-}[drr]& &  \bullet \ar@{-}[dll] \ar@{-}[d] \\
    \bullet & &  \bullet}
  \end{displaymath}  
      \hspace{100pt} $X$
\end{minipage}\noindent
\begin{minipage}{0.5\linewidth}
\vspace{9pt}

 \includegraphics[angle=270,scale=0.3]{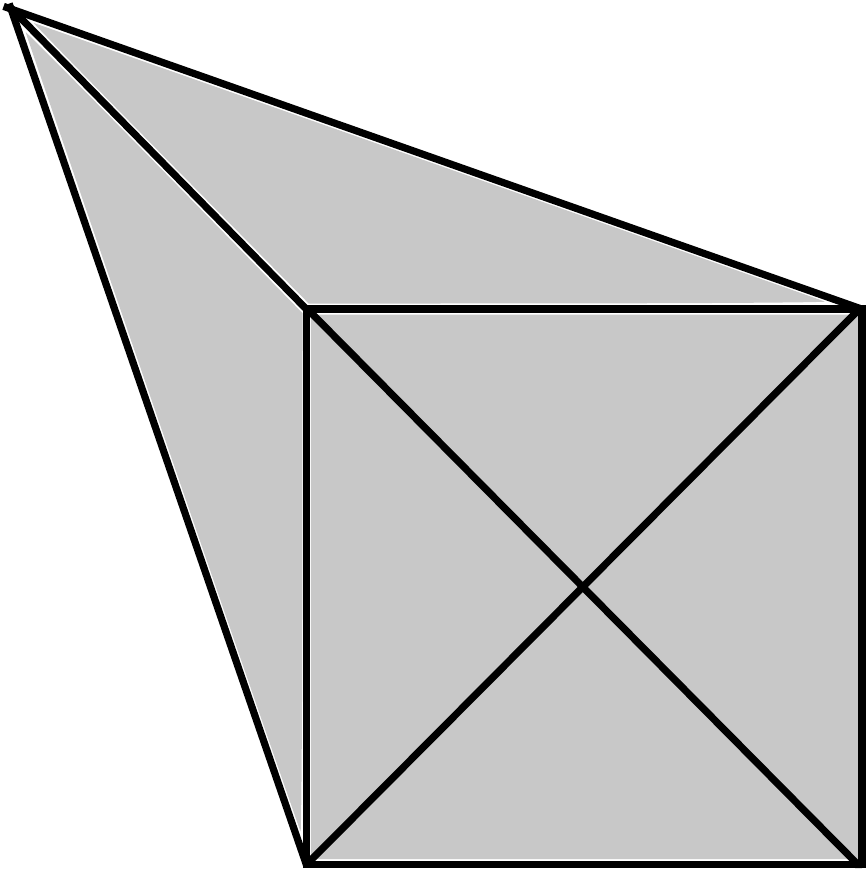}\\

\vspace{0pt}\hspace{7pt} $\K(X)$
\end{minipage}

\caption{The order complex of a finite space.}
\label{fig:figurecomplejo}
\end{figure}

A continuous function $f:X\to Y$ gives rise to a simplicial map $\K(f):\K(X)\to \K(Y)$ in the obvious way, and a simplicial map between complexes $\varphi:K\to L$ induces a continuous function $\X(\varphi):\X(K)\to \X(L)$.

The functors $\K$ and $\X$ are not mutually inverse. In fact, if $K$ is a simplicial complex, then $\K(\X(K))$ is the barycentric subdivision $K'$ of $K$. McCord proved in \cite{mc} that there are weak homotopy equivalences $\K(X)\to X$ and $K\to \X(K)$. Given two finite spaces $X$ and $Y$, they are weak homotopy equivalent if and only if their associated complexes are homotopy equivalent. Moreover, two finite simplicial complexes are homotopy equivalent if and only if their face posets are weak equivalent (viewed as finite spaces). Therefore the weak homotopy types of finite spaces cover all the homotopy types of compact polyhedra. At this point, it is important to notice that the Whitehead theorem does not hold for finite spaces. Counterexamples can be found in \cite{ba,bami1}. Concretely, there are finite spaces which are homotopically trivial (i.e. weak homotopy equivalent to the singleton) but not contractible.

We recall now  some of the main results of the homotopy theory of finite spaces.

A point $x\in X$ is called a \textit{down beat point} if $\hat{U}_x=\{y\in X:y<x\}$ has a maximum and an \textit{up beat point} if $\hat{F}_x=\{y\in X:y>x\}$ has a minimum. In both cases the subspace $X-x\subset X$ is a strong deformation retract. Furthermore, two finite spaces $X$ and $Y$ have the same homotopy type if and only if $Y$ can be obtained from $X$ by removing and adding beat points. The characterization of homotopy equivalent spaces in terms of beat points is due to Stong \cite{st}.

A point $x\in X$ is called a \textit{weak point} if $\hat{U}_x$ or $\hat{F}_x$ is a contractible finite space, or equivalently, if $\hat C_x=\{y\in X: y< x \text{ or } y> x\}$ is contractible. In this case the inclusion $X-x\hookrightarrow X$ is a weak homotopy equivalence. This notion was introduced in \cite{bami1} to study the simple homotopy theory of finite spaces and its applications to problems of simple homotopy theory of polyhedra (see\cite{ba,bami1}).

Since finite spaces with maximum or minimum are contractible, the notion of weak point generalizes that of beat point.
The process of removing a weak point is called an \textit{elementary collapse}, and a space $X$ is called \textit{collapsible} if there is a sequence of elementary collapses that transforms $X$ into a single point.
Contractible finite spaces are collapsible and collapsible spaces are homotopically trivial. None of the converse implications hold (see \cite{ba,bami1} for more details).

\subsection{QC-reducible spaces and quasi-constructible complexes}

The notions of qc-reduction and qc-reducible space were introduced by Barmak to investigate the Andrews-Curtis conjecture using finite spaces. 
We recall here these notions and we refer the reader to \cite{ba} for a detailed exposition on qc-reducible spaces and the Andrews-Curtis conjecture. 

\begin{defi}
 Let $X$ be a finite space of height $2$ and let $a, b \in X$ be two maximal points. If $U_a\cup U_b$ is contractible, we will say that there is a \textit{qc-reduction} from $X$ to $Y-\{a,b\}$, where $Y=X\cup\{c\}$ with $a,b<c$. The point $c$, which replaces $a$ and $b$ in $Y-\{a,b\}$, is called a \textit{relative} of $a$ and $b$. Figure \ref{qcred} illustrates a qc-reduction.
\end{defi}

\begin{figure}[h]
\noindent\begin{minipage}{0.32\linewidth}
  \begin{displaymath}
\xymatrix@C=10pt{\\
 \bullet a \ar@{-}[d] \ar@{-}[drr] & \bullet b \ar@{-}[d] \ar@{-}[dr] & \bullet \ar@{-}[dl] \ar@{-}[d] \\
 \bullet \ar@{-}[d] \ar@{-}[dr] & \bullet \ar@{-}[dl] \ar@{-}[dr] \ar@{-}[drr] & \bullet \ar@{-}[dll] \ar@{-}[dl] \ar@{-}[d] \ar@{-}[dr]\\
 \bullet & \bullet & \bullet & \bullet}
  \end{displaymath}
\hspace{50pt} $X$
\end{minipage}
\noindent\begin{minipage}{0.32\linewidth}
\vspace{-2.5pt}
  \begin{displaymath}
\xymatrix@C=10pt{
 \bullet c \ar@{-}[d] \ar@{-}[dr]\\
 \bullet a \ar@{-}[d] \ar@{-}[drr] & \bullet b \ar@{-}[d] \ar@{-}[dr] & \bullet \ar@{-}[dl] \ar@{-}[d] \\
 \bullet \ar@{-}[d] \ar@{-}[dr] & \bullet \ar@{-}[dl] \ar@{-}[dr] \ar@{-}[drr] & \bullet \ar@{-}[dll] \ar@{-}[dl] \ar@{-}[d] \ar@{-}[dr]\\
 \bullet & \bullet & \bullet & \bullet}
  \end{displaymath}
\hspace{50pt} $Y$
\end{minipage}
\noindent\begin{minipage}{0.32\linewidth}
\vspace{5.7pt}
  \begin{displaymath}
\xymatrix@C=10pt{\\
 \bullet c \ar@{-}[d] \ar@{-}[dr] \ar@{-}[drr] &   & \bullet \ar@{-}[dl] \ar@{-}[d] \\
 \bullet \ar@{-}[d] \ar@{-}[dr] & \bullet \ar@{-}[dl] \ar@{-}[dr] \ar@{-}[drr] & \bullet \ar@{-}[dll] \ar@{-}[dl] \ar@{-}[d] \ar@{-}[dr]\\
 \bullet & \bullet & \bullet & \bullet} 
  \end{displaymath}
\hspace{40pt} $Y- \{a,b\}$
\end{minipage}
\caption{A qc-reduction.}\label{qcred}
\end{figure}
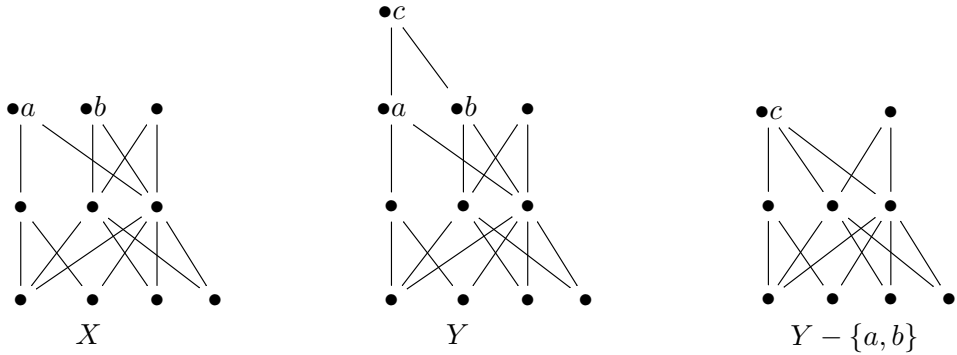

\begin{defi}
 A finite space $X$ of height $2$ is called \textit{qc-reducible} if one can obtain a space with maximum by performing a sequence of qc-reductions.
\end{defi}

\begin{obs}
 In the first step of a qc-reduction, the point $c$ added to $X$ is a weak point of $Y$. The points $a,b$ removed in the second step are beat points of $Y$. It follows that the qc-reductions preserve the weak homotopy type and that qc-reducible spaces are homotopically trivial. Of course, not every homotopically trivial finite space is qc-reducible. However one can show that any contractible finite space of height $2$ is qc-reducible.
\end{obs}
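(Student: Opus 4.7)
The plan is to address the five claims of the remark in order, since the first four are essentially mechanical while the fifth carries the real content.

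First I would verify that $c$ is a weak point of $Y$. Since $c$ was adjoined above $a$ and $b$ as a new maximal element of $Y$, $\hat F_c^Y = \emptyset$, so it suffices to show that $\hat U_c^Y$ is contractible. By construction $\hat U_c^Y = U_a^X \cup U_b^X$, which is contractible by the very definition of a qc-reduction. Hence the inclusion $X = Y \setminus \{c\} \hookrightarrow Y$ is a weak homotopy equivalence (equivalently, removing $c$ from $Y$ is an elementary collapse).

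Next I would check that $a$ and $b$ are beat points of $Y$. Both were maximal in $X$, so in $Y$ the only element above either is $c$; thus $\hat F_a^Y = \{c\} = \hat F_b^Y$, each having $c$ as its minimum, making them up beat points. Removing $a$ leaves $\hat F_b$ unchanged, so $b$ remains an up beat point of $Y \setminus \{a\}$, and two successive beat point removals yield a homotopy equivalence $Y \setminus \{a,b\} \hookrightarrow Y$. Composing with the weak equivalence from the first step shows that qc-reductions preserve weak homotopy type. Since any space with a maximum is contractible, a qc-reducible space is weakly equivalent to a contractible one, hence homotopically trivial.

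For the last assertion, that every contractible finite space of height $2$ is qc-reducible, I would induct on $|X|$ and try to exhibit a genuine qc-reduction at each step. Contractibility of $X$ (in Stong's sense) supplies a beat point, and the favourable case is a maximal down beat point $a$ with maximum element $v$ of $\hat U_a$: choosing any other maximal $b$ with $v < b$, one has $\hat U_a \subseteq U_b$, so inside $U_a \cup U_b$ the point $a$ is again a down beat point, collapsing the subspace onto $U_b$, which is contractible because it has the maximum $b$. This produces a valid qc-reduction pair $(a,b)$. The main obstacle is the degenerate situation in which no such $b$ exists (so that $v$ lies below only $a$ among the maximals of $X$), together with the subtler requirement that the space produced by a qc-reduction remain \emph{contractible} rather than merely homotopically trivial, so that the inductive hypothesis applies. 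I expect this latter point, tracking Stong's beat-point structure through the reduction, to be the most delicate part and the real heart of the argument.
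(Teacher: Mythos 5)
Your verification of the first four assertions is correct and is the intended argument: $\hat{U}_c^Y=U_a\cup U_b$ is contractible, so $c$ is a weak point of $Y$ and $X\hookrightarrow Y$ is a weak homotopy equivalence; $\hat{F}_a^Y=\hat{F}_b^Y=\{c\}$, so $a$ and $b$ are up beat points (and $b$ remains one after $a$ is removed), so $Y-\{a,b\}\hookrightarrow Y$ is a homotopy equivalence; chaining these shows qc-reductions preserve the weak homotopy type, and a qc-reducible space is therefore weak equivalent to a space with maximum, hence homotopically trivial. (You do not address the claim that not every homotopically trivial space is qc-reducible, but the paper settles that by the explicit example of Figure 3, so nothing is lost.)

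The genuine gap is in the final claim, and you have located it yourself without closing it. Your induction runs on the space obtained \emph{after} a qc-reduction, which requires (i) the existence of a \emph{maximal down} beat point --- contractibility only guarantees some beat point, which may be an up beat point or non-maximal --- and (ii) that the qc-reduced space is again \emph{contractible} rather than merely homotopically trivial; neither is established, and (ii) in particular is not automatic. The standard way around both obstacles (this is the argument in Barmak's book, to which the paper defers) is to induct on $\lvert X\rvert$ by removing a beat point $x$ of $X$ itself, so that $X-x$ is contractible by Stong's theorem and the inductive hypothesis applies to $X-x$, and then to lift the qc-reductions of $X-x$ back to $X$. If $x$ is not maximal, one checks that for maximal points $a,b$ the set $U_a^{X}\cup U_b^{X}$ deformation retracts onto $U_a^{X-x}\cup U_b^{X-x}$ (because $x$ stays a beat point of any open subset containing it), so every reduction valid in $X-x$ is valid in $X$. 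If $x$ is maximal, hence a down beat point, one performs in $X$ the reductions of $X-x$, none of which involves $x$; by the observation that such reductions leave $U_x$ unchanged, this ends at a space with exactly two maximal points $x$ and $m$ whose minimal open sets cover it, still contractible because $x$ is still a down beat point, and one last qc-reduction produces a maximum. Without an argument of this kind, the last sentence of the remark remains unproved.
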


\begin{ej}
 The finite space $X$ of Figure \ref{collnoqc}, which was introduced in \cite{ba}, is collapsible and therefore weak equivalent to a point. However no qc-reduction can be performed on $X$.
\end{ej}

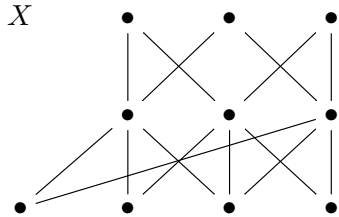
\begin{figure}[h]
 \begin{displaymath}
 \xymatrix@C=10pt{X
& & \bullet \ar@{-}[d] \ar@{-}[drr] & & \bullet \ar@{-}[dll] \ar@{-}[drr] & & \bullet \ar@{-}[dll] \ar@{-}[d]\\
& & \bullet \ar@{-}[dll] \ar@{-}[d] \ar@{-}[drr] & & \bullet \ar@{-}[dll] \ar@{-}[d] \ar@{-}[drr] & & \bullet \ar@{-}[dllllll] \ar@{-}[dll] \ar@{-}[d]\\
  \bullet & & \bullet & & \bullet & & \bullet }
 \end{displaymath}
\caption{A collapsible and non-qc-reducible space.\label{collnoqc}}
\end{figure}

\bigskip

 The proof of the following result, which will be used in the next section, can be found in \cite{ba}. 

\begin{prop}\label{qc-red}
 Let $X$ be a finite space of height at most 2 and such that $H_2(X)=0$. Let $a,b$ be two maximal elements of $X$. Then the following are equivalent.
\begin{itemize}
 \item[1.]$U_a \cup U_b$ is contractible.
 \item[2.]$U_a \cap U_b$ is nonempty and connected.
 \item[3.]$U_a \cap U_b$ is contractible.
\end{itemize}
\end{prop}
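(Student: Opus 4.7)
The strategy is to prove the cycle $(3)\Rightarrow(1)\Rightarrow(2)\Rightarrow(3)$. Two structural observations drive the argument. First, both $U_a$ and $U_b$ have a maximum (namely $a$ and $b$), so each is a contractible finite space whose order complex is a simplicial cone with apex $a$ or $b$. Second, since $a,b$ are maximal in $X$ and $X$ has height at most $2$, the intersection $U_a\cap U_b$ lies strictly below both $a$ and $b$ and therefore has height at most $1$; so its order complex $\K(U_a\cap U_b)$ is a graph. Moreover, because $U_a$ and $U_b$ are down-sets, any chain in $U_a\cup U_b$ lies entirely in one of them, giving $\K(U_a\cup U_b)=\K(U_a)\cup \K(U_b)$ with intersection $\K(U_a\cap U_b)$, i.e., a clean simplicial pushout.

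The implication $(3)\Rightarrow(2)$ is immediate, and $(1)\Rightarrow(2)$ I would get from a van Kampen argument: if $U_a\cap U_b=\emptyset$ then $U_a\cup U_b=U_a\sqcup U_b$ is disconnected, and if $U_a\cap U_b$ is nonempty but disconnected, then $\pi_1(U_a\cup U_b)$ is free of rank $|\pi_0(U_a\cap U_b)|-1>0$ (since the two ambient pieces are simply connected), contradicting contractibility. For $(3)\Rightarrow(1)$, I would apply the gluing lemma to the pushout $\K(U_a)\leftarrow\K(U_a\cap U_b)\to\K(U_b)$: all three terms are contractible and the maps are cofibrations of subcomplexes, so $\K(U_a\cup U_b)$ is contractible, and this would then be upgraded to strong (finite-space) contractibility of $U_a\cup U_b$ via a beat-point reduction mirroring the one that trivializes $U_a\cap U_b$.

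The heart of the proof, and where the hypothesis $H_2(X)=0$ enters, is $(2)\Rightarrow(3)$. Since $\K(U_a\cap U_b)$ is a connected $1$-dimensional complex, it suffices to show $H_1(U_a\cap U_b)=0$; then this graph is a tree, hence contractible, and the finite space itself is strongly contractible by iteratively removing leaves (each being a beat point of $U_a\cap U_b$). Suppose toward contradiction that $\gamma$ is a nonzero $1$-cycle in $\K(U_a\cap U_b)$. Then $a\ast\gamma$ and $b\ast\gamma$ are $2$-chains in $\K(U_a)$ and $\K(U_b)$ respectively (and hence in $\K(X)$), each with boundary $\gamma$ by the cone formula $\partial(v\ast\gamma)=\gamma-v\ast\partial\gamma$. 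Therefore $a\ast\gamma-b\ast\gamma$ is a $2$-cycle in $\K(X)$, and it is nonzero because the simplices of $a\ast\gamma$ all contain the vertex $a$ while those of $b\ast\gamma$ all contain $b\neq a$. Since $X$ has height at most $2$, $\K(X)$ has no $3$-simplices, so there are no $2$-boundaries and this cycle represents a nonzero class in $H_2(X)=H_2(\K(X))$, contradicting the hypothesis.

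The main obstacle I anticipate is the upgrade from weak to strong contractibility in $(3)\Rightarrow(1)$: the gluing lemma gives a contractible order complex immediately, but passing to strong contractibility of the finite space $U_a\cup U_b$ requires a careful beat-point argument. This is clean for down beat points (since $\hat{U}_x$ is unchanged on passing from $U_a\cap U_b$ to $U_a\cup U_b$, as anything below $x\in U_a\cap U_b$ automatically lies in both $U_a$ and $U_b$), but needs extra care for up beat points, because $\hat{F}_x$ in $U_a\cup U_b$ can strictly contain $\hat{F}_x$ in $U_a\cap U_b$; one should then peel the tree from its height-$1$ leaves first or, failing that, invoke the equivalence of weak and strong contractibility available in this restricted height-$2$ setting.
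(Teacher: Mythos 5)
The paper itself gives no proof of this proposition --- it explicitly defers to \cite{ba} --- so there is nothing internal to compare against; I am judging your argument on its own terms and against the standard proof in that reference. Your reduction to the simplicial decomposition $\K(U_a\cup U_b)=\K(U_a)\cup\K(U_b)$ with intersection the graph $\K(U_a\cap U_b)$ is correct, and your proof of $(2)\Rightarrow(3)$ --- the only place where $H_2(X)=0$ enters --- is complete and essentially the same computation as in \cite{ba} (there phrased via Mayer--Vietoris): the cone cycle $a\ast\gamma-b\ast\gamma$ is a $2$-cycle of the $2$-dimensional complex $\K(X)$, it cannot vanish because no simplex contains both of the incomparable vertices $a$ and $b$, there are no $2$-boundaries, and a connected graph with trivial $H_1$ is a tree whose height-$\le 1$ face poset is dismantled by removing leaves, which are beat points. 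The van Kampen argument for $(1)\Rightarrow(2)$ is also fine.

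The genuine gap is in $(3)\Rightarrow(1)$. The gluing lemma only yields that $\K(U_a\cup U_b)$ is contractible, i.e.\ that the finite space $U_a\cup U_b$ is \emph{homotopically trivial}; statement (1) asserts that it is \emph{contractible}, which for finite spaces is strictly stronger, and this extra strength is exactly what qc-reductions require ($c$ must be a weak point of $Y$ and $a,b$ beat points of $Y$). Your fallback --- ``invoke the equivalence of weak and strong contractibility available in this restricted height-2 setting'' --- is false: the space of Figure \ref{collnoqc} in this paper is collapsible (hence homotopically trivial) and of height $2$, yet it is not contractible, since every contractible space of height $2$ is qc-reducible while that space admits no qc-reduction at all. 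Your primary route, ``a beat-point reduction mirroring the one that trivializes $U_a\cap U_b$,'' is the right idea but is not carried out, and your own closing paragraph identifies correctly why it fails verbatim: an up beat point of $U_a\cap U_b$ (a minimal element below a single element of the intersection) need not be a beat point of $U_a\cup U_b$, since it may lie below several incomparable height-one elements of $U_a\setminus U_b$ or $U_b\setminus U_a$. A correct proof must organize the dismantling globally --- for instance, first removing the points of $(U_a\cup U_b)\setminus(U_a\cap U_b)$ other than $a$ and $b$ in decreasing order of height (each then has $\{a\}$ or $\{b\}$ as its set of strict upper bounds, hence is an up beat point), and then running an induction on $\lvert U_a\cap U_b\rvert$ in which the beat points are chosen in a suitable order, interleaving reductions inside and outside the intersection. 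As written, the implication $(3)\Rightarrow(1)$ is asserted rather than proved, so the cycle of implications is not closed.
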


Here $H_2(X)$ denotes the second homology group of $X$ with integer coefficients.

By \cite[Thm 11.2.10]{ba}, the associated finite space $\X(K)$ of a $2$-complex $K$ is qc-reducible if and only if $K$ is contractible and \textit{quasi-constructible}. The class of  quasi-constructible complexes contains all constructible complexes. We refer the reader to \cite{bjo, ko} for more details on constructible complexes.

\section{Main Results}

\vspace{5pt}
We will use the homotopy theory of finite spaces to investigate Whitehead's asphericity question. We will focus our attention on the finite case (i.e. counterexamples of type $(a)$) of Howie's theorem \ref{ho} and restate the question in terms of finite spaces.

Suppose that there exists a counterexample of type $(a)$. Since every finite CW-complex is homotopy equivalent to a finite simplicial complex of the same dimension (see \cite[(7.2)]{coh}), it follows that there is a contractible simplicial complex $L$ of dimension 2 and a subcomplex $K=L-\sigma$ with $\sigma$ a 2-simplex of $L$, such that $K$ is connected and non-aspherical. Let $\X(L)$ and $\X(K)$ be their face posets. Then $\X(L)$ is a homotopically trivial space of height 2 and $\X(K)$ is a connected, non-aspherical subspace of $\X(L)$, obtained by removing the maximal point $\sigma\in\X(L)$. This motivates the following conjecture.

\begin{conj}\label{conj_etf}
 Let $X$ be a homotopically trivial finite space of height 2 and let $a\in X$ be a maximal point such that $X-a$ is connected. Then $X-a$ is aspherical.
\end{conj}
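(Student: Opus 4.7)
My plan is to reformulate the statement as a question about 2-dimensional CW complexes via the order complex, and then attempt an inductive attack using the finite-space machinery developed in Section \ref{preliminaries}. Set $L:=\K(X)$ and $K:=\K(X-a)$. By McCord's theorem, $L$ is weakly equivalent to $X$, hence to a point, so as a CW complex $L$ is contractible. Because $a$ is maximal and $\he(X)\leq 2$, every simplex of $L$ containing $a$ has the form $\{a\}\cup c$ for some chain $c$ in $\hat{U}_a$; the closed star $\overline{\st}(a)$ is therefore the cone with apex $a$ over $\lk(a)=\K(\hat{U}_a)$, and $\lk(a)$ is a graph. Moreover $K$ is obtained from $L$ by removing the open star of $a$, so $L=K\cup\overline{\st}(a)$ with $K\cap\overline{\st}(a)=\lk(a)$. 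By the weak equivalence $\K(X-a)\to X-a$, the conjecture reduces to showing $\pi_2(K)=0$.

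The easy homological step comes first. Mayer--Vietoris for this cover, combined with the contractibility of $L$ and $\overline{\st}(a)$ and the fact that $\dim\lk(a)\leq 1$, immediately yields $H_2(K)=0$ and $H_1(K)\cong \tilde H_1(\lk(a))$; van Kampen on the same cover, together with $\pi_1(L)=\pi_1(\overline{\st}(a))=0$, shows that $\pi_1(K)$ is normally generated by the image of $\pi_1(\lk(a))$, giving an explicit presentation whose relators come from cycles of the graph $\lk(a)$. The genuine difficulty is that $H_2(K)=0$ does not imply $\pi_2(K)=0$: by Hurewicz on the universal cover, $\pi_2(K)\cong H_2(\tilde K)$, and we must control $H_2(\tilde K)$. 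The natural attempt is to extend $\tilde K$ by attaching, for each component of the preimage of $\lk(a)$ in $\tilde K$, a cone with a new apex above $a$, forming a complex $\tilde L$ carrying a free $\pi_1(K)$-action; if $\tilde L$ could be shown to be contractible, then ordinary Mayer--Vietoris in degree $2$ would force $H_2(\tilde K)=0$, since the other terms vanish. However, proving $\tilde L$ contractible is tantamount to proving asphericity of $K$ from scratch --- it is essentially Whitehead's question --- so this attack cannot succeed in full generality.

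The realistic plan is therefore to exploit the combinatorial structure of $X$, arguing by induction on $|X|$. The idea is to remove a weak point $x\neq a$ from $X$, verify that this removal induces a simplicial collapse of $K=\K(X-a)$ (or at least preserves its weak homotopy type), and apply the inductive hypothesis to the smaller finite space. When $X$ is qc-reducible one can complete the program by tracking each qc-reduction on the pair $(X,a)$ using Proposition \ref{qc-red} and the contractibility of the relevant unions $U_p\cup U_q$; the base case is a height $1$ space, in which $K$ is a graph or a cone and is trivially aspherical. The main obstacle, and what keeps the full conjecture open, is precisely the situation in which $X$ admits no weak point or qc-reduction compatible with the distinguished maximal element $a$: then no combinatorial simplification of $X$ respects the subspace $X-a$, and one is thrown back on Whitehead's question itself.
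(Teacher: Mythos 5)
You should note first that the statement you were asked to prove is labelled a \emph{Conjecture} in the paper: the authors do not prove it, and indeed it is equivalent (for compact polyhedra) to the finite case of Whitehead's question. Your write-up is honest about this --- you correctly identify that the easy part is $H_2(\K(X-a))=0$ via Mayer--Vietoris on $\K(X)=\K(X-a)\cup\overline{\st}(a)$, that the real issue is $\pi_2$ via the universal cover, and that forcing contractibility of the cover is circular. So there is no false claim here, but there is also no proof, and none was available; the only fair comparison is between your sketch of the tractable special case and what the paper actually establishes, namely Theorem \ref{main1}: the conjecture holds when $X$ is qc-reducible.

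For that special case your proposed route diverges from the paper's and, as written, would not close. You suggest induction on $|X|$, removing a weak point or performing a qc-reduction ``compatible with $a$'' and terminating at a height-$1$ base case. The difficulty is that a reduction performed on $X$ need not induce anything sensible on the subspace $X-a$, and qc-reductions never lower the height, so the proposed base case is never reached. The paper's argument is structured quite differently: Lemma \ref{split} reorders the qc-reductions into a first phase not involving $a$ or its relatives (these reductions \emph{do} restrict to $X-a$, since for maximal $p,q\neq a$ the set $U_p\cup U_q$ is the same in $X$ and $X-a$) and a second phase in which every reduction involves a relative of $a$; Lemma \ref{pi_1} then applies van Kampen iteratively to the cover $X-a=\hat{U}_a\cup\bigcup U_{a'_i}$, using Proposition \ref{qc-red} to know each successive intersection is nonempty and connected, to conclude that $\pi_1(X-a)\cong\pi_1(\hat{U}_a)$ is \emph{free} because $\he(\hat{U}_a)\leq 1$. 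The decisive final step, absent from your proposal, is Cockroft's theorem \cite{coc}: since $\K(X)$ is contractible and is obtained from the connected subcomplex $L=\K(X-a)\cup(\st a)^{(1)}$ by attaching $2$-cells, and $\pi_1(L)$ is free, $L$ is aspherical, hence so is $\K(X-a)$. Without the identification of the fundamental group as free and the appeal to Cockroft, tracking qc-reductions alone does not yield asphericity of $X-a$.
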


It is clear that if conjecture \ref{conj_etf} is valid, then there are no counterexamples of type $(a)$. On the other hand, suppose that the answer to Whitehead's question is \textit{positive}. Given a homotopically trivial finite space $X$ of height 2 and a maximal point $a\in X$  such that $X-a$ is connected, the associated simplicial complex $\K(X)$ is aspherical (in fact, it is contractible) and the subcomplex $\K(X-a)$ of $\K(X)$ is connected. Hence $\K(X-a)$, and therefore also $X-a$, are aspherical. This means that, in this case,  conjecture \ref{conj_etf} is also valid.

\medskip

Note that Whitehead's original question is equivalent, in the case of compact polyhedra, to asking whether every subspace of a finite aspherical space of height two is itself aspherical.

Sometimes it is convenient to work with spaces which are not necessarily path connected.  In general we will say that a space $X$ is aspherical if every path connected component of $X$ is aspherical.

\begin{obs}
 Let $x,y\in X$ be two maximal points. If $X'$ is obtained from $X$ by performing a qc-reduction that does not involve $x$ nor $y$, then $x,y\in X'$ and $U^X_x\cup U^X_y=U^{X'}_x\cup U^{X'}_y$. Here we write $U^X_x,\ U^{X'}_x$ in order to distinguish whether the open subsets are considered in $X$ or $X'$.
\end{obs}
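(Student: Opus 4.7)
The plan is to unwind the construction of the qc-reduction and actually verify the stronger pointwise identities $U^{X'}_x = U^X_x$ and $U^{X'}_y = U^X_y$, from which the claimed equality of unions follows at once. Let $a,b\in X$ be the two maximal points involved in the qc-reduction; the hypothesis that it does not involve $x$ or $y$ means $\{a,b\}\cap\{x,y\}=\emptyset$. Following the definition, set $Y = X\cup\{c\}$ with the new relations $a<c$ and $b<c$ (together with their transitive consequences below $c$), so that $X' = Y\setminus\{a,b\}$. Since $x,y\neq a,b$, we already get $x,y\in X'$.

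First I would check that adjoining $c$ does not alter $U_x$. The element $c$ is added only as an upper bound of $a$ and $b$, so nothing in $Y$ lies strictly above $c$; combined with $c\neq x$, this rules out $c\leq_Y x$, hence $c\notin U^Y_x$. The order on $Y$ restricted to pairs of elements of $X$ coincides with the original order on $X$, so $U^Y_x = U^X_x$, and symmetrically $U^Y_y=U^X_y$.

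Next I would pass from $Y$ down to $X'$. Because the order on $X' = Y\setminus\{a,b\}$ is simply the restriction of the order on $Y$, we have
\[
U^{X'}_x \;=\; U^Y_x\cap X' \;=\; U^X_x\setminus\{a,b\}.
\]
But $a,b,x$ are pairwise distinct maximal elements of $X$, and two distinct maximal points of a poset are always incomparable, so neither $a$ nor $b$ belongs to $U^X_x$. Therefore $U^{X'}_x = U^X_x$, and the symmetric argument gives $U^{X'}_y = U^X_y$; taking unions yields the stated identity.

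The whole argument is really bookkeeping, and the only mild subtlety is keeping careful track of which ambient space each $U$ is computed in, since the notation $U_z$ depends on the surrounding poset. No genuine obstacle arises once the definition of a qc-reduction and the incomparability of distinct maximal elements are in hand.
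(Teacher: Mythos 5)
Your argument is correct: the paper states this as a remark without proof, and your verification (the stronger pointwise identities $U^{X'}_x=U^X_x$, $U^{X'}_y=U^X_y$, using that $c$ is added only above $a,b$ and that distinct maximal points are incomparable, so $a,b\notin U^X_x\cup U^X_y$) is exactly the routine bookkeeping the authors leave implicit. Nothing to add.
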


\begin{lema}\label{split}
 Let $X$ be a finite qc-reducible space and let $a\in X$ be a maximal point. Then the qc-reductions can be reordered and split into two phases such that
 \begin{itemize}
 \item[i)] in the first phase, the point $a$ and its relatives are not involved in any reduction,
 \item[ii)] in the second phase every reduction involves a relative of the point $a$.
 \end{itemize}
\end{lema}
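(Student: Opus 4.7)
The plan is to prove Lemma \ref{split} by an adjacent-transposition (bubble-sort) argument on the given qc-reduction sequence. First I set up terminology: in a fixed sequence, call a point a \emph{relative of $a$} if it is $a$ itself or a point $c$ introduced by a reduction one of whose two combined points is already a relative of $a$, and call a reduction of \emph{type R} if at least one of its two combined maximal points is a relative of $a$, and of \emph{type N} otherwise. What the lemma asks is precisely that every such sequence can be reordered so that all type N reductions precede all type R reductions.

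The nontrivial ingredient is a local swap lemma: if in the sequence a type R reduction $\rho$ is immediately followed by a type N reduction $\rho'$, they can be exchanged, producing the same subsequent spaces and the same labelling of points. Writing $X_{i-1}\xrightarrow{\rho}X_i\xrightarrow{\rho'}X_{i+1}$, suppose $\rho$ combines maximal points $x,y$ of $X_{i-1}$ (with, say, $x$ a relative of $a$) into a new point $c$, while $\rho'$ combines maximal points $u,v$ of $X_i$ (neither a relative of $a$) into a new point $c'$; in particular $u,v\ne c$. I would first show that $u,v$ are already maximal in $X_{i-1}$: any $z\in X_{i-1}$ with $u<z$ either satisfies $z\notin\{x,y\}$, in which case $u<z$ survives in $X_i$, or $z\in\{x,y\}$, in which case $u<z<c$ in $X_i$; either way the $X_i$-maximality of $u$ is contradicted. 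Hence $x,y,u,v$ are pairwise incomparable in $X_{i-1}$, and since $c$, $c'$ are maximal in their own spaces they lie outside the relevant down-sets, so $U_u^{X_{i-1}}=U_u^{X_i}$ and $U_v^{X_{i-1}}=U_v^{X_i}$. Thus $U_u\cup U_v$ remains contractible in $X_{i-1}$ and $\rho'$ is a legal reduction there, producing $X_i':=(X_{i-1}\setminus\{u,v\})\cup\{c'\}$. A symmetric check shows $x,y$ are still maximal in $X_i'$ with $U_x$, $U_y$ unchanged, so $\rho$ is legal on $X_i'$ and yields exactly the original $X_{i+1}$. The relative-labelling is preserved, since $u,v$ remain non-relatives (so $c'$ stays non-relative) while $x$ is still a relative (so $c$ stays a relative).

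The main obstacle is the bookkeeping ensuring that the four open subsets $U_u,U_v,U_x,U_y$ are genuinely unchanged through the swap; all of it is forced by the single geometric observation that $x,y,u,v$ are jointly maximal in the common space $X_{i-1}$ and hence pairwise incomparable, together with the fact that $c$ and $c'$ are constructed from disjoint down-sets. With the swap lemma in hand, a standard bubble sort pushes every type N reduction to the left of every type R reduction, producing the two phases required by the statement.
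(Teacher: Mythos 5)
Your proposal is correct and follows essentially the same route as the paper: the paper's proof also reorders the given sequence so that all reductions not touching $a$ or its relatives come first, justifying each displacement by the observation (its Remark preceding the lemma) that a qc-reduction not involving two maximal points $x,y$ leaves $U_x\cup U_y$ unchanged, which is exactly the content of your swap lemma. Your version merely packages this as an adjacent-transposition/bubble-sort argument and verifies the commutation in more detail (maximality of $u,v$ in $X_{i-1}$, invariance of the down-sets, equality of the resulting posets), whereas the paper performs the whole first phase in one pass and defers the skipped reductions to the end.
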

\begin{proof}
 Let $X=X_0,X_1,X_2,\dots,X_n$ be a sequence of finite spaces starting with $X$ where each $X_i$ is obtained from $X_{i-1}$ by a qc-reduction $q_i$ and such that $X_n$ has a maximum.
 Let $q_{i_1},q_{i_2},\dots,q_{i_k}$ be the reductions involving $a$ and its relatives. Concretely, $q_{i_1}$ involves $a=a_0$ and $a'_0\in X_{i_1-1}$, which are replaced by $a_1$ in $X_{i_1}$, $q_{i_2}$ involves $a_1$ and $a'_1\in X_{i_2-1}$ which are replaced by $a_2$ in $X_{i_2}$ and so on.
 We will now perform all the reductions which don't involve the relatives of $a$. In order to do that, set $\widetilde{X}_0=X_0,\widetilde{X}_1=X_1,\dots,\widetilde{X}_{i_1-1}=X_{i_1-1}$, then skip the reduction $q_{i_1}$ and set $\widetilde{X}_{i_1}=\widetilde{X}_{i_1-1}$. By the previous remark, the next reduction $q_{i_1+1}$ can be performed on $\widetilde{X}_{i_1}$,
 because it does not involve $a_1$
 (unless $i_1+1=i_{2}$, in which case we  keep skipping reductions). In this way we obtain $\widetilde{X}_{i_1+1}$. Analogously, we can perform the reductions $q_{i_1+2},\dots,q_{i_2-1}$ and skip $q_{i_2}$. We proceed similarly with every $q_{i_j}$ and obtain $\widetilde{X}_n$ when the first phase of reductions is over.

 Again by the previous remark, it is easy to see that the reductions $q_{i_1},q_{i_2},\dots,q_{i_k}$ can now be performed on $\widetilde{X}_n$ to obtain a space with maximum. This is the second phase.
\end{proof}

\begin{lema}\label{pi_1}
 Let $X$ be a finite qc-reducible space of height 2 and let $a\in X$ be a maximal point such that $X-a$ is connected. If every qc-reduction involves a relative of $a$, then $\hat{U}_a$ is connected and the inclusion $\hat{U}_a\hookrightarrow X-a$ induces an isomorphism $\pi_1(\hat{U}_a)\to \pi_1(X-a)$.
\end{lema}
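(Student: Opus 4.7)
The plan is to proceed by induction on the number $k$ of qc-reductions needed to turn $X$ into a space with maximum. The hypothesis forces the reduction sequence to have a very specific form: at step $i$, a relative $a_{i-1}$ of $a$ (with $a_0=a$) is combined with a maximal point $b_{i-1}$ of $X$ to produce the next relative $a_i$, and the $b_j$ for $0\le j\le k-1$ are exactly the maximal points of $X$ other than $a$. A direct induction shows $\hat U_{a_i}^{X_i}=\hat U_a\cup\hat U_{b_0}\cup\cdots\cup\hat U_{b_{i-1}}$, and the reducibility condition at each step, via Proposition \ref{qc-red} (applicable since $X$ is qc-reducible and hence has trivial $H_2$), is precisely that the intersections $V_i:=\hat U_{b_i}\cap \hat U_{a_i}^{X_i}$ are contractible.

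The base case $k=0$ forces $a$ to be the maximum of $X$, so $X-a=\hat U_a$ and the conclusion is trivial. For the inductive step, let $X_1$ be the space obtained from the first qc-reduction; then $X_1-a_1=X-\{a,b_0\}$ and $\hat U_{a_1}^{X_1}=\hat U_a\cup\hat U_{b_0}$, and the remaining $k-1$ reductions still all involve relatives of $a_1$. Assuming for the moment that $X_1-a_1$ is connected, the inductive hypothesis applies to $X_1$ at the maximal point $a_1$, giving that $\hat U_a\cup\hat U_{b_0}$ is connected and that the inclusion into $X-\{a,b_0\}$ induces an isomorphism on $\pi_1$.

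From here the $\pi_1$-statement follows from two applications of van Kampen's theorem, performed on the order complex via the McCord equivalences so that the finite spaces are treated as CW-complexes. The open cover $X-a=(X-\{a,b_0\})\cup U_{b_0}$ has intersection $\hat U_{b_0}$, and the piece $U_{b_0}$ is contractible since it has maximum $b_0$; this identifies $\pi_1(X-a)$ with the quotient of $\pi_1(X-\{a,b_0\})$ by the normal closure of the image of $\pi_1(\hat U_{b_0})$. The open cover $\hat U_a\cup\hat U_{b_0}$ by $\hat U_a$ and $\hat U_{b_0}$ has intersection $\hat U_a\cap\hat U_{b_0}=U_a\cap U_{b_0}$, which is contractible by Proposition \ref{qc-red} applied to the very first qc-reduction; this gives $\pi_1(\hat U_a\cup\hat U_{b_0})=\pi_1(\hat U_a)\ast\pi_1(\hat U_{b_0})$. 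Chaining these via the inductive hypothesis and killing the $\pi_1(\hat U_{b_0})$ factor leaves $\pi_1(X-a)\cong\pi_1(\hat U_a)$, and tracing through the natural transformations shows that the isomorphism is the one induced by the inclusion.

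The hard part is the connectedness bookkeeping. The induction needs $X-\{a,b_0\}$ to be connected in order to apply the inductive hypothesis, and the lemma's conclusion itself demands that $\hat U_a$ be connected. I plan to run the $\pi_0$-analogue of the argument in parallel with the $\pi_1$-calculation, using that each intersection set appearing in the two van Kampen decompositions is connected and nonempty (in particular $V_0=U_a\cap U_{b_0}$ is contractible and nonempty by Proposition \ref{qc-red}). The same iterated pushout structure that collapses $\pi_1(X-a)$ onto $\pi_1(\hat U_a)$ then forces $\pi_0(\hat U_a)\cong\pi_0(X-a)$, so the connectedness of $X-a$ given in the hypothesis is inherited by $\hat U_a$, completing the proof.
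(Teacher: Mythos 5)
Your overall strategy --- van Kampen's theorem combined with Proposition \ref{qc-red} applied to the intersections coming from the successive reductions --- is the right one, but the top-down inductive organization has a genuine gap that the paper's proof is specifically arranged to avoid. Your inductive step requires $X_1-a_1=X-\{a,b_0\}$ to be connected, and your first van Kampen computation requires $\hat{U}_{b_0}$ to be connected; neither follows from the hypotheses, and the proposed ``$\pi_0$-analogue run in parallel'' cannot rescue them. Concretely, let $X$ have maximal points $a,b_0,b_1$, middle points $m_1,m_2$ and minimal points $p,q,c_2$, with covering relations $a>m_1$, $b_0>m_1$, $b_0>c_2$, $b_1>m_2$, $m_1>p$, $m_2>p$, $m_2>q$. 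Then $U_a\cup U_{b_0}$ is contractible (with $U_a\cap U_{b_0}=\{m_1,p\}$) and, after the first reduction, $U_{a_1}\cup U_{b_1}=X_1$ is contractible (with $U_{a_1}\cap U_{b_1}=\{p\}$), so $X$ is qc-reducible by two reductions, both involving relatives of $a$, and $X-a$ is connected. However $\hat{U}_{b_0}=\{m_1,p,c_2\}$ is disconnected and $c_2$ becomes an isolated point of $X-\{a,b_0\}$, so $X_1-a_1$ is disconnected and the inductive hypothesis cannot be invoked; likewise the identification of $\pi_1(X-a)$ with $\pi_1(X-\{a,b_0\})$ modulo the normal closure of the image of $\pi_1(\hat{U}_{b_0})$ is not the correct van Kampen output when the intersection $\hat{U}_{b_0}$ is disconnected. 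What your $\pi_0$ remark does prove is $\pi_0(\hat{U}_a)\cong\pi_0(X-a)$; it is the connectivity of the intermediate space $X-\{a,b_0\}$ that genuinely fails.

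The paper sidesteps this entirely by running the van Kampen argument bottom-up rather than top-down: it writes $X-a=\hat{U}_a\cup\bigcup_i U_{a'_i}$ and attaches the sets $U_{a'_i}$ one at a time, in the order dictated by the reduction sequence. Each $U_{a'_i}$ is contractible (it has a maximum), and its intersection with the union built so far is exactly $U_{a_i}\cap U_{a'_i}$, which is contractible by Proposition \ref{qc-red}; hence each attachment preserves both $\pi_1$ and the number of connected components, and no intermediate deletion of a maximal point --- and no connectivity of any $\hat{U}_{a'_i}$ --- is ever needed. If you want to keep your inductive formulation you would have to drop the connectivity hypothesis and prove a component-wise (or groupoid) version of the statement, which is a substantial reworking; the bottom-up gluing is the cleaner route.
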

\begin{proof}
 Similarly as in the previous lemma, let $X=X_0,X_1,X_2,\dots,X_n$ be a sequence of finite spaces where each $X_i$ is obtained from $X_{i-1}$ by a qc-reduction $q_i$ which replaces $a_{i-1}$ and $a'_{i-1}$ with $a_i$, where $a=a_0$, and such that $X_n$ has a maximum. The elements $a'_i$ are exactly the maximal points of $X$ other than $a$. Thus $X=U_a\cup\bigcup_{i=0}^{n-1} U_{a'_i}$ and $X-a=\hat{U}_a\cup\bigcup_{i=0}^{n-1} U_{a'_i}$.

 Since $\hat{U}_a\cap U_{a'_0}=U_a\cap U_{a'_0}$ is nonempty and connected, by the van Kampen theorem it follows that the inclusion $\hat{U}_a\hookrightarrow \hat{U}_a\cup U_{a'_0}$ induces an isomorphism in the fundamental groups. Since a second reduction, involving $a_1$ and $a'_1$, can be performed,  we know that $(\hat{U}_a\cup U_{a'_0})\cap U_{a'_1}=(\hat{U}_a\cup \hat{U}_{a'_0})\cap U_{a'_1}=\hat{U}_{a_1}\cap U_{a'_1}=U_{a_1}\cap U_{a'_1}$ is also nonempty and connected. Again by van Kampen, it follows that $\hat{U}_a\hookrightarrow \hat{U}_a\cup U_{a'_0}\cup U_{a'_1}$ induces an isomorphism in the fundamental groups. The result now follows by iterating this reasoning.

Note that, since the intersections $U_{a_i}\cap U_{a'_i}$ are connected, $\hat{U}_a$ is also connected. Otherwise $\hat{U}_a\cup U_{a'_0}$ could not be connected, neither could $\hat{U}_a\cup\bigcup_{i=0}^{k} U_{a'_i}$ for any $1\leq k\leq n-1$.
\end{proof}

We are now ready to prove one of the main results of the article.

\begin{teo}\label{main1}
 Let $X$ be a finite qc-reducible space of height 2 and let $a\in X$ be a maximal point such that $X-a$ is connected. Then $X-a$ is aspherical.
\end{teo}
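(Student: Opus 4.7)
The plan is to upgrade Lemma \ref{pi_1} from a $\pi_1$-isomorphism to a weak homotopy equivalence $\hat U_a\hookrightarrow X-a$. Since $\hat U_a$ consists of elements strictly below the maximal point $a$ in a height-$2$ space, it has height at most $1$, so $\K(\hat U_a)$ is a graph and hence aspherical (its universal cover is a tree). Because asphericity is invariant under weak homotopy equivalence, this would finish the proof.

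First I would apply Lemma \ref{split} to reorder the qc-reductions of $X$ into two phases and let $X'=\widetilde X_n$ be the space produced by phase $1$. Each phase-$1$ reduction combines two maximal points distinct from $a$ and from every relative of $a$; in particular both remain maximal in $X-a$, and the contractibility of $U_x\cup U_y$ is unaffected by removing the unrelated maximal point $a$. Hence every phase-$1$ reduction can be performed verbatim on $X-a$, producing $X'-a$, and since qc-reductions preserve weak homotopy type one obtains $X-a\simeq_{wk}X'-a$.

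For phase $2$ I would follow the proof of Lemma \ref{pi_1} but strengthen its van Kampen step to a full gluing argument. Filter $X'-a$ by $B_{-1}=\hat U_a$ and $B_i=B_{i-1}\cup U_{a'_i}$ for $0\le i\le n-1$, so that $B_{n-1}=X'-a$, using the ordering of the phase-$2$ reductions and the notation of the proof of Lemma \ref{pi_1}. The same bookkeeping as in that proof identifies $B_{i-1}\cap U_{a'_i}$ with $U_{a_i}\cap U_{a'_i}$, taken in the intermediate phase-$2$ space $X'_i$. Homotopy triviality of $X$ (a consequence of qc-reducibility) combined with weak-homotopy invariance of qc-reductions gives $H_2(X'_i)=0$, so Proposition \ref{qc-red} applies and this intersection is contractible. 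Since $U_{a'_i}$ is itself contractible (it has maximum $a'_i$), and since downward-closedness of the $B_i$ makes $\K$ turn the union into a pushout of simplicial subcomplexes, the gluing lemma applied to $\K(B_i)=\K(B_{i-1})\cup_{\K(B_{i-1}\cap U_{a'_i})}\K(U_{a'_i})$ shows that each inclusion $B_{i-1}\hookrightarrow B_i$ is a weak equivalence. Iterating yields $\hat U_a\simeq_{wk}X'-a\simeq_{wk}X-a$, completing the argument.

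The main difficulty will be the bookkeeping identification $B_{i-1}\cap U_{a'_i}=U_{a_i}\cap U_{a'_i}$ across the several spaces in play ($X$, $X'$, $X'_i$ and their deletions of $a$ and of its successive relatives), together with checking that none of the maximal points produced in phase $1$ accidentally appear inside the lower sets we care about. This is not conceptually deep but is the point at which the phase-$2$ hypothesis that every reduction involves a relative of $a$ is genuinely used, and must be handled with careful indexing so that the iterative pushout argument really applies.
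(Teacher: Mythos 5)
Your argument is correct, and while it opens exactly as the paper does --- splitting the qc-reductions into two phases via Lemma \ref{split}, reducing to the case where every reduction involves a relative of $a$, and using the same decomposition $X-a=\hat{U}_a\cup\bigcup_i U_{a'_i}$ with the identification $B_{i-1}\cap U_{a'_i}=U_{a_i}\cap U_{a'_i}$ that appears in the proof of Lemma \ref{pi_1} --- it then diverges in the key step. The paper extracts only $\pi_1$ information: van Kampen (needing just that the intersections are nonempty and connected) gives $\pi_1(X-a)\cong\pi_1(\hat{U}_a)$, which is free since $\he(\hat{U}_a)\leq 1$; it then passes to $\mathcal{K}(X)=\mathcal{K}(X-a)\cup\st a$, inserts the intermediate complex $L=\mathcal{K}(X-a)\cup(\st a)^{(1)}$, and invokes Cockroft's theorem (adding $2$-cells to a complex with free fundamental group to obtain the contractible $\mathcal{K}(X)$) to conclude that $L$, and hence $\mathcal{K}(X-a)$, is aspherical. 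You instead use the full strength of Proposition \ref{qc-red} --- contractibility, not mere connectivity, of $U_{a_i}\cap U_{a'_i}$ --- together with the fact that $\K$ carries unions of open (down-closed) subsets to unions of subcomplexes, so the gluing lemma upgrades each inclusion $B_{i-1}\hookrightarrow B_i$ to a weak equivalence. This buys a self-contained argument entirely inside finite-space techniques, with no appeal to Cockroft, and a strictly stronger conclusion: $X-a$ has the weak homotopy type of the height-$\leq 1$ space $\hat{U}_a$, i.e., of a graph, rather than merely being aspherical. The price is the extra bookkeeping you flag (checking that the phase-$1$ maximal points and the discarded points $a'_j$, $j<i$, do not interfere with the intersections), but that bookkeeping is the same as in the paper's own proof of Lemma \ref{pi_1} and goes through.
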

\begin{proof}
 Applying \ref{split}, let $Y$ be the last space obtained in the first phase of the reductions. Note that $X-a$ is weak equivalent to $Y-a$ by performing the same qc-reductions. Note also that $Y$ is qc-reducible by reductions that involve only the point $a$ and its relatives. By \ref{pi_1}, $\pi_1(Y-a)=\pi_1(\hat{U}_a)$ and this group is free, since $\he(\hat{U}_a)\leq1$.

 Let us now consider the associated complexes of these spaces. $\mathcal{K}(X)$ is contractible and $\mathcal{K}(X-a)$ is a connected subcomplex of $\mathcal{K}(X)$ with free fundamental group. More precisely, $\mathcal{K}(X)=\mathcal{K}(X-a)\cup \st a$ with $\mathcal{K}(X-a)\cap \st a=\lk a$. Here $\st a$ denotes the (closed) star of the vertex $a\in \mathcal{K}(X)$ and $\lk a$ denotes its link. Therefore the CW-complex $\mathcal{K}(X)$ is obtained from $\mathcal{K}(X-a)$ by attaching one 0-cell, and some 1-cells and 2-cells. Let $(\st a)^{(1)}$ be the 1-skeleton of $\st a$ and let $L=\mathcal{K}(X-a)\cup (\st a)^{(1)}$ be the complex between $\mathcal{K}(X-a)$ and $\mathcal{K}(X)$ missing only the 2-cells of $\st a$. Then $L$ is also connected and $\pi_1(L)$ is also free. By \cite[Thm 1]{coc}, $L$ is aspherical. Since $L$ is obtained from  $\mathcal{K}(X-a)$ by attaching only $0$-cells and $1$-cells,   it follows that $\mathcal{K}(X-a)$ is also aspherical.
\end{proof}

As an immediate consequence of this result we deduce the following.

\begin{coro}
 There are no counterexamples of type $(a)$ in the class of quasi-constructible and contractible 2-complexes.
\end{coro}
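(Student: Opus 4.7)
The plan is to transfer the statement into the language of finite spaces via the face poset functor and then invoke Theorem \ref{main1} directly. Suppose, toward a contradiction, that $K\subset L$ is a counterexample of type (a) lying in the class under consideration, so $L$ is a contractible quasi-constructible 2-complex and $K=L-\sigma$ is a connected, non-aspherical subcomplex obtained by removing a 2-cell $\sigma$. By \cite[Thm 11.2.10]{ba}, which is recalled at the end of Section \ref{preliminaries}, the face poset $\X(L)$ is qc-reducible; since $\dim L\leq 2$, the finite space $\X(L)$ has height at most 2.

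Next I would identify $K$ with $\X(L)$ minus a maximal point. The removed 2-simplex $\sigma$ is a maximal element $a$ of $\X(L)$, and by the very definition of the face poset, the subcomplex relation $K=L-\sigma$ corresponds to the subspace $\X(K)=\X(L)-a$. Connectedness of $K$ translates directly into connectedness of $\X(L)-a$, so the hypotheses of Theorem \ref{main1} are met. Applying that theorem yields that $\X(L)-a$ is aspherical.

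Finally, I would transport asphericity back across McCord's weak equivalences. The order complex $\mathcal{K}(\X(K))$ is the barycentric subdivision $K'$ of $K$, and McCord's theorem provides a weak homotopy equivalence $\mathcal{K}(\X(K))\to\X(K)$. Combining this with the homotopy equivalence $K\simeq K'$ shows that $K$ is weakly equivalent to $\X(K)=\X(L)-a$, and hence $K$ is aspherical, contradicting the assumption that $K$ is non-aspherical. This establishes the corollary.

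I do not anticipate any real obstacle: the proof is essentially a dictionary translation between simplicial 2-complexes and height-2 finite $T_0$-spaces, followed by a single application of Theorem \ref{main1}. The only point that deserves a brief check is the correspondence ``contractible quasi-constructible $\Leftrightarrow$ qc-reducible face poset,'' which is exactly the content of \cite[Thm 11.2.10]{ba} that we quote. If one wishes to allow $L$ to be a CW-complex rather than a simplicial complex in the formulation of type (a), one first replaces $L$ by a simplicial 2-complex of the same dimension via \cite[(7.2)]{coh}, as was already done in the discussion motivating Conjecture \ref{conj_etf}.
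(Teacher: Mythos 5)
Your proposal is correct and is exactly the argument the paper intends: the corollary is stated as an immediate consequence of Theorem \ref{main1}, with the translation going through \cite[Thm 11.2.10]{ba} (qc-reducible face poset $\Leftrightarrow$ contractible quasi-constructible complex) and McCord's weak equivalences, just as you spell out. No issues.
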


\subsection{Aspherical reductions}

We investigate now a method of reduction which preserves the asphericity of finite spaces of height $2$. 
Recall that, given a point $x\in X$, we denote by $\hat{C}_x$ the subspace $\hat{C}_x=\{y\in X:\ y<x \text{ or } y>x\}$.

\begin{defi}
Let $X$ be a finite space. A point $x\in X$ is called an \textit{a-point} if $\hat{C}_x$ is a disjoint union of contractible spaces. In that case, the move $X\to X-x$ is called an \textit{a-reduction}. A finite space $X$ of height at most $2$ is called \textit{strong aspherical} if there is a sequence of  a-reductions which transforms $X$ into the singleton.
\end{defi}

Note that the notion of a-point generalizes that of weak point (see Section \ref{preliminaries}). In particular, collapsible finite spaces of height $2$ are strong aspherical. Note also that strong aspherical spaces are not necessarily homotopically trivial. In fact, every finite space of height one is strong aspherical.

\begin{prop} \label{a-points}
 Let $X$ be a finite space of height at most $2$ and let $a\in X$ be an a-point. Then $X$ is aspherical if and only if $X-a$ is aspherical. In particular, strong aspherical spaces are aspherical.
\end{prop}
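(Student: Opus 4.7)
The plan is to work at the level of order complexes and use McCord's theorem to transport asphericity between $X$ and $\K(X)$, and between $X-a$ and $\K(X-a)$. The structural fact I would exploit is that $\K(X)=\K(X-a)\cup\st(a)$ with $\K(X-a)\cap\st(a)=\lk(a)$, where $\lk(a)$ can be identified with $\K(\hat C_a)$ since a simplex of $\K(X)$ containing $a$ is a chain through $a$ whose remaining vertices form a chain in $\hat C_a$. The a-point hypothesis then says $\hat C_a=\bigsqcup_{i=1}^{k}\hat C_i$ with each $\hat C_i$ contractible as a finite space, so each $L_i:=\K(\hat C_i)$ is a contractible subcomplex of $\K(X-a)$ and $\lk(a)=\bigsqcup_{i=1}^{k} L_i$. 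Since $\st(a)=a\ast\lk(a)$ is a cone, it is contractible, so the collapse yields a homotopy equivalence $\K(X)\simeq \K(X-a)/\lk(a)$.

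The next step is to rewrite this quotient in a form convenient for analyzing asphericity. Crushing the disjoint contractible subcomplexes $L_i$ one at a time (each step being a homotopy equivalence) produces a CW-complex $Z\simeq \K(X-a)$ in which each $L_i$ has been replaced by a single vertex $v_i$, and
\[
\K(X-a)/\lk(a)\;\cong\;Z/\{v_1,\ldots,v_k\}.
\]
Thus the theorem reduces to showing that $Z$ is aspherical if and only if $Z/\{v_1,\ldots,v_k\}$ is aspherical. Let $C_1,\ldots,C_m$ be the path components of $Z$ and let $k_j$ be the number of $v_i$ that lie in $C_j$. Identifying all $v_i$ to one point wedges together, at a common basepoint, the spaces $C_j/\{v_i\in C_j\}$ for $k_j\geq 1$, while leaving the components with $k_j=0$ untouched; attaching $k_j-1$ arcs between successive marked vertices of $C_j$ and collapsing them identifies $C_j/\{v_i\in C_j\}\simeq C_j\vee\bigvee_{k_j-1} S^1$.

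To close the argument I would use the standard fact that a wedge of connected CW-complexes is aspherical if and only if each wedge summand is aspherical: the universal cover is a tree of copies of the universal covers of the summands, contractible precisely when each summand has contractible universal cover. Combined with $S^1$ being aspherical, this gives that $Z/\{v_1,\ldots,v_k\}$ is aspherical if and only if each $C_j$ is aspherical, if and only if $Z$ is aspherical, if and only if $\K(X-a)$, and therefore $X-a$, is aspherical. The hard part of the plan is the bookkeeping when $\K(X-a)$ is disconnected and the $L_i$ are spread across several components, since the quotient simultaneously fuses components and introduces new fundamental-group generators; the wedge decomposition above is what keeps this under control. Finally, the assertion that strong aspherical spaces are aspherical follows by induction along any sequence of a-reductions bringing $X$ down to the singleton.
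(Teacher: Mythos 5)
Your argument is correct and follows essentially the same route as the paper: both rest on the decomposition $\K(X)=\K(X-a)\cup a\K(\hat C_a)$ with $\lk(a)=\K(\hat C_a)$ a disjoint union of contractible complexes, together with the fact that modifications of dimension at most one do not affect asphericity. The paper phrases this as attaching a cone on a discrete space (i.e.\ cells of dimension $0$ and $1$) to $\K(X-a)$, whereas you dually collapse $\st(a)$ and the contractible pieces of the link and then unwind the resulting point identifications as wedging on circles and fusing components --- a correct and somewhat more explicit justification of the step the paper leaves implicit.
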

\begin{proof}
 Since $\K(X)=\K(X-a)\cup a\K(\hat{C}_a)$ and  $\K(\hat{C}_a)$ is a disjoint union of contractible spaces, the space $\K(X)$ is homotopically equivalent to a space obtained from $\K(X-a)$ by attaching a cone of a discrete subspace. Since attaching cells of dimensions 0 and 1 does not change the asphericity, it follows that $\K(X)$ is aspherical if and only if $\K(X-a)$ is.
\end{proof}

The following two examples illustrate how to use this result as a method of reduction to investigate asphericity of finite spaces. Note that, in both cases, the original spaces have no weak points.

\begin{ej}
In the space $X$ of figure \ref{fig:nonasph}, the elements $a$ and $d$ are a-points. The space $Y$ which one obtains removing these points has only two maximal elements, $b$ and $c$. Therefore $Y$ is homotopy equivalent to the \textit{non-Hausdorff suspension} of $U_b\cap U_c$ (see \cite{ba}). Since  $U_b\cap U_c$ is not acyclic, it follows that $Y$ is non-aspherical (in fact its associated complex is homotopy equivalent to $S^2$). This shows that the space $X$ (and consequently, the associated complex $\k(X)$) is non-aspherical.

\begin{figure}[H]
  \begin{displaymath}
  \xymatrix@C=20pt{X
&  \bullet a \hspace{-5pt} \ar@{-}[d] \ar@{-}[drrr] & \bullet b \hspace{-5pt} \ar@{-}[dl] \ar@{-}[d] \ar@{-}[dr] \ar@{-}[drr] & \bullet c \hspace{-5pt} \ar@{-}[dl] \ar@{-}[d] \ar@{-}[dr]\\
&  \bullet \ar@{-}[d] \ar@{-}[dr] & \bullet \ar@{-}[dl] \ar@{-}[d] \ar@{-}[dr] & \bullet \ar@{-}[dl] \ar@{-}[dr]  & \bullet \ar@{-}[dl] \ar@{-}[d] & \bullet d \hspace{-5pt} \ar@{-}[dllll] \ar@{-}[dl]\\
&  \bullet & \bullet & \bullet & \bullet }
  \end{displaymath} 
\caption{}
\label{fig:nonasph}
\end{figure}
\end{ej}

\begin{ej}
In the space $X$ of figure \ref{fig:asph}, the elements $a$, $b$, $c$ and $d$ are a-points. If we remove these points, we obtain a space with minimum, and therefore contractible. It follows that $X$ (and consequently, $\k(X)$) is aspherical.

\begin{figure}[H]
  \begin{displaymath}
  \xymatrix@C=20pt{X
& \bullet \ar@{-}[d] \ar@{-}[dr] & \bullet \ar@{-}[dl]  \ar@{-}[dr] & \bullet \ar@{-}[dl] \ar@{-}[d] \\
\bullet d \hspace{-5pt} \ar@{-}[d] \ar@{-}[dr] \ar@{-}[drrr] & \bullet \ar@{-}[dl] \ar@{-}[d] \ar@{-}[dr] & \bullet \ar@{-}[dl] \ar@{-}[d] \ar@{-}[dr]  & \bullet \ar@{-}[dlll] \ar@{-}[dl] \ar@{-}[d] \\
\bullet a \hspace{-5pt} & \bullet b \hspace{-5pt} & \bullet   & \bullet c \hspace{-5pt} }
  \end{displaymath} 
\caption{}
\label{fig:asph}
\end{figure}
\end{ej}

\begin{teo}\label{strongaspherical}
Let $X$ be a strong aspherical space and let $a\in X$. Then $X-a$ is strong aspherical.
\end{teo}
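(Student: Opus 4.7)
My plan is to proceed by induction on $|X|$. Fix an a-reduction sequence $X = X_0, X_1, \ldots, X_n = \{*\}$ with $X_{i+1} = X_i - x_i$ and each $x_i$ an a-point of $X_i$. If $a = x_0$, the tail $X_1, \ldots, X_n$ is already an a-reduction sequence for $X - a$ and there is nothing to prove. Otherwise, set $x = x_0 \neq a$: the tail witnesses that $X - x$ is strong aspherical, and since $|X - x| < |X|$, the inductive hypothesis applied to $X - x$ with the point $a$ gives that $(X - x) - a = (X - a) - x$ is strong aspherical. It will then suffice to show that $x$ remains an a-point of $X - a$: prepending the a-reduction $X - a \to (X - a) - x$ to the inductively obtained sequence for $(X - a) - x$ produces the required sequence for $X - a$.

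The whole argument is therefore anchored on the following lemma, which I would isolate and prove first: if $x$ is an a-point of a finite space $X$ of height at most $2$ and $a \in X \setminus \{x\}$, then $x$ is an a-point of $X - a$. I would prove it in three short steps. First, $\hat{C}_x$ has height at most $1$ in $X$, because a chain of length $2$ inside $\hat{C}_x$ would, together with $x$, yield a chain of length $3$ in $X$, contradicting $\he(X) \le 2$. Second, a connected finite space of height at most $1$ is contractible exactly when its order complex (a bipartite graph) is a tree: any leaf in such a graph corresponds to a beat point of the poset -- a maximum with singleton $\hat{U}$ or a minimum with singleton $\hat{F}$ -- so stripping leaves reduces the space to a point via Stong's beat-point theorem. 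Consequently, the hypothesis ``$\hat{C}_x$ is a disjoint union of contractible spaces'' is equivalent to $\K(\hat{C}_x)$ being a forest. Third, forests remain forests after deletion of a vertex together with its incident edges, so the order complex of $\hat{C}_x^{X-a}$ -- which is either $\K(\hat{C}_x)$ itself (when $a$ is incomparable to $x$) or $\K(\hat{C}_x)$ with the vertex $a$ removed -- is again a forest. Hence $\hat{C}_x^{X-a}$ is a disjoint union of contractible spaces, and $x$ is an a-point of $X - a$.

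The mildly delicate step is the characterization of contractibility for height-$\le 1$ finite spaces, but it drops out immediately from the leaf-stripping observation above; everything else is bookkeeping on the reduction sequence. The base case of the induction is trivial: for $|X| = 2$ both subcases are immediate, and $|X| = 1$ is a vacuous degeneracy.
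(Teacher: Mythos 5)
Your proof is correct and follows essentially the same route as the paper's: induction on $|X|$, splitting on whether $a$ is the first point removed, and otherwise showing that the first removed point $b$ remains an a-point of $X-a$ so that the reduction can be prepended to the sequence obtained inductively for $(X-a)-b$. Your forest/tree characterization of contractibility in height $\leq 1$ supplies a clean justification for the step that the paper merely asserts (that $\hat{C}_b-a$ is still a disjoint union of contractible spaces), but the overall argument is the same.
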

\begin{proof}
 We proceed by induction on the cardinality of $X$. Choose a sequence of a-reductions from $X$ to the singleton.
 If $a$ is the first a-point to be removed, then $X-a$ is strong aspherical by definition.  Otherwise, let $b\in X$ be the first reduction, with $b\neq a$. Then $X-b$ is strong aspherical and, by induction, $X-\{a,b\}$ is strong aspherical.

On the other hand, since $\hat{C}_b$ is a disjoint union of contractible subspaces, and it is of height at most 1, then $\hat{C}_b-a$  is also a disjoint union of contractible subspaces (possibly empty). This implies that $b$ is also an a-point of $X-a$. Since $(X-a)-b$ is strong aspherical, it follows that $X-a$ is strong aspherical.
\end{proof}

\begin{coro}
 The answer to Whitehead's original question is positive for strong aspherical finite spaces.
\end{coro}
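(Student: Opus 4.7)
The plan is to assemble Theorem \ref{strongaspherical} and Proposition \ref{a-points} via the equivalent formulation of Whitehead's question noted earlier in the paper. Recall that the remark between Conjecture \ref{conj_etf} and the subsection on aspherical reductions observes that, in the compact polyhedral case, Whitehead's original question amounts to asking whether every subspace of a height-two aspherical finite space is aspherical. So for the Corollary it suffices to fix a strong aspherical finite space $X$ and an arbitrary subspace $Y\subseteq X$, and show that $Y$ is aspherical.

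First I would reduce to the one-point-at-a-time case. Enumerate $X\setminus Y=\{x_1,x_2,\ldots,x_k\}$ and form the chain
\[
X=X_0\supset X_1\supset\cdots\supset X_k=Y, \qquad X_i:=X_{i-1}-x_i.
\]
Since $X$ has height at most $2$, every $X_i$ inherits the same bound on its height, so the notion of strong asphericity makes sense for each term in the chain. Next I would apply Theorem \ref{strongaspherical} inductively: $X_0=X$ is strong aspherical by hypothesis, and each single-point removal preserves strong asphericity, so after $k$ steps the subspace $Y=X_k$ is strong aspherical. Finally, Proposition \ref{a-points} (in particular its last sentence) gives that $Y$ is aspherical.

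The proof is really a bookkeeping exercise with the two preceding results, so I do not expect a genuine obstacle. The only thing worth verifying carefully is that the closure of the class of strong aspherical spaces under taking arbitrary subspaces is what one needs, rather than just under removing a single point; that is handled by the induction on $|X\setminus Y|$ above. Note also that no assumption of connectedness on $Y$ is required, since asphericity was explicitly extended to the disconnected setting by declaring that each path component be aspherical.
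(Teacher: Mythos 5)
Your argument is correct and is precisely the intended one: the paper states this corollary without proof because it follows immediately by iterating Theorem \ref{strongaspherical} to conclude that every subspace of a strong aspherical space is strong aspherical, and then invoking Proposition \ref{a-points}. Your additional checks (heights of subspaces, the disconnected convention for asphericity) are sound and do not change the substance.
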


\begin{defi}
 A simplicial complex $K$ is called \textit{strong aspherical} if its face poset $\X(K)$ is strong aspherical.
\end{defi}

By the previous results, the answer to Whitehead's question is positive for strong aspherical simplicial complexes.
Note that this fact can also be proved using the following characterization of strong aspherical complexes.

\begin{prop}
 A simplicial complex $L$ is strong aspherical if and only if $L$ collapses to a one-dimensional subcomplex.
\end{prop}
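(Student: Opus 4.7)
The easy direction, that $L\searrow L_0$ with $\dim L_0\le 1$ implies $\X(L)$ strong aspherical, uses the standard correspondence between simplicial and finite-space collapses. A free-face collapse $(\sigma,\tau)$ in $L$ corresponds to two weak-point removals in $\X(L)$: first $\sigma$, which is an up beat point of $\X(L)$ (because $\hat F_\sigma=\{\tau\}$), and then $\tau$, which is a weak point of $\X(L)-\sigma$ (because $\hat U_\tau$ in $\X(L)-\sigma$ is the face poset of $\partial\tau$ with a single facet removed, whose order complex is a disk). Both moves are a-reductions. Iterating through the collapse sequence yields an a-reduction from $\X(L)$ to $\X(L_0)$, and the latter has height at most $1$, so is strong aspherical by the remark after Proposition \ref{a-points}; composing, $\X(L)$ itself is strong aspherical.

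For the converse I would induct on $|L|$. The base case is $L$ with no $2$-simplex, where $L$ is already a one-dimensional subcomplex of itself. For the inductive step the crucial claim is: \emph{if $\X(L)$ is strong aspherical and $L$ contains a $2$-simplex, then $L$ has a free face.} Granting the claim, collapsing such a free face produces $L'\subsetneq L$ with $\X(L')=\X(L)-\{\sigma,\tau\}$ still strong aspherical (by two applications of Theorem \ref{strongaspherical}), and the inductive hypothesis delivers a one-dimensional $L_0$ with $L\searrow L'\searrow L_0$.

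To prove the claim, suppose for contradiction that $L$ has a $2$-simplex but no free face. Then every edge of $L$ lies in $0$ or $\ge 2$ triangles and every vertex lies in $0$ or $\ge 2$ edges. Let $M\subseteq L$ denote the subcomplex generated by the $2$-simplices. I would show by induction on the reduction step $i$ that no element of $M$ is ever an a-point of $X_i$; since $M$ has at least four elements, this contradicts $\X(L)$ being strong aspherical. For a triangle $T\in M$, all six proper faces of $T$ lie in $M$ and hence survive in $X_i$, so $\K(\hat C_T\cap X_i)$ is the barycentric subdivision of $\partial T\cong S^1$. For an edge $e\in M$, the two endpoints of $e$ and the $\ge 2$ triangles containing $e$ all lie in $M$ and survive, giving $\K(\hat C_e\cap X_i)=K_{2,k}$ with $k\ge 2$, which contains a $4$-cycle. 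The main obstacle is the vertex case: for $v\in M$, the order complex $\K(\hat C_v\cap X_i)$ decomposes as the disjoint union of $\lk_M(v)'$ (the barycentric subdivision of the link of $v$ in $M$) with finitely many isolated vertices, one for each surviving edge through $v$ that lies in no triangle; in $\lk_M(v)$ every vertex has degree $\ge 2$ (each edge of $M$ through $v$ lies in $\ge 2$ triangles, all of which automatically pass through $v$), so this non-empty graph must contain a cycle. In each case the relevant order complex is not a disjoint union of contractible spaces, so the would-be a-point fails the test and the claim follows.
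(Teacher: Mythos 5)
Your proof is correct and takes essentially the same route as the paper's: the forward direction is the standard translation of elementary simplicial collapses into pairs of weak-point removals, and the converse rests on the same key obstruction, namely that in a $2$-complex with no free face the simplices of the subcomplex $M$ generated by the triangles can never become a-points, because their punctured links contain cycles. The only organizational difference is that you extract one free face at a time and induct on the number of simplices, whereas the paper first collapses $L$ maximally and then derives a single contradiction from the resulting collapse-free complex; your triangle/edge/vertex case analysis makes explicit some details (edges lying in no triangle, the persistence of $M$ along the entire reduction sequence) that the paper's terser argument compresses. One small caveat on the easy direction: ``the order complex is a disk'' does not by itself imply that the finite space $\hat{U}_\tau-\sigma$ is contractible, since Whitehead's theorem fails for finite spaces; the poset in question is a fence that reduces by beat points, so the conclusion is right, but the justification should go through beat points rather than through the order complex.
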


\begin{proof}
 It is clear that if $L$ collapses to a one-dimensional subcomplex $K$, then $\X(L)$ a-reduces to $\X(K)$, which in turn a-reduces to a point. 

 Conversely, suppose that $\X(L)$ is strong aspherical and that $L$ collapses to a two-dimensional subcomplex $K$ on which no more collapses can be performed. Since $K$ has no collapses, the only possible a-points of $\X(K)$ are minimal points. Given such an a-point $v\in \X(K)$, $\hat{F}_v$ must be discrete, since it is a space of height at most 1 in which every component is contractible and it does not have beat points. This implies that no
element $\sigma \in \X(K)$ of height $2$ can become an a-point and be removed
in a process of a-reductions. This contradicts the fact that,  by \ref{strongaspherical}, the subspace $\X(K)$ of $\X(L)$ is strong aspherical.
\end{proof}

\medskip

{\bf Acknowledgement.}
We would like to thank Jonathan Barmak for useful comments and suggestions.

\end{document}